\documentclass[12pt]{article}
\oddsidemargin 0 mm
\topmargin -10 mm
\headheight 0 mm
\headsep 0 mm
\textheight 246.2 mm
\textwidth 159.2 mm
\footskip 9 mm
\setlength{\parindent}{0pt}
\setlength{\parskip}{5pt plus 2pt minus 1pt}
\pagestyle{plain}
\usepackage{amssymb}
\usepackage{amsthm}
\usepackage{amsmath}
\usepackage{graphicx}
\usepackage{enumerate}
\usepackage{pict2e}
\usepackage{framed}

\DeclareMathOperator{\Max}{Max}
\DeclareMathOperator{\Min}{Min}
\DeclareMathOperator{\C}{{\rm C}}

\newtheorem{theorem}{Theorem}[section]
\newtheorem{definition}[theorem]{Definition}
\newtheorem{lemma}[theorem]{Lemma}
\newtheorem{proposition}[theorem]{Proposition}

\newtheorem{remark}[theorem]{Remark}
\newtheorem{example}[theorem]{Example}
\newtheorem{corollary}[theorem]{Corollary}
\title{Derived operators on skew orthomodular and strong skew orthomodular posets}
\author{Ivan~Chajda and Helmut~L\"anger}
\date{}

\begin{document}

\footnotetext{Support of the research of the first author by the Czech Science Foundation (GA\v CR), project 24-14386L, entitled ``Representation of algebraic semantics for substructural logics'', and by IGA, project P\v rF~2025~008, is gratefully acknowledged.}
	
\maketitle
	
\begin{abstract}
It is well-known that in the logic of quantum mechanics disjunctions and conjunctions can be represented by joins and meets, respectively, in an orthomodular lattice provided their entries commute. This was the reason why J.~Pykacz introduced new derived operations called ``sharp'' and ``flat'' coinciding with joins and meets, respectively, for commuting elements but sharing some appropriate properties with disjunction and conjunction, respectively, in the whole orthomodular lattice in question. The problem is that orthomodular lattices need not formalize the logic of quantum mechanics since joins may not be defined provided their entries do not commute. A corresponding fact holds for meets. Therefore, orthomodular posets are more accepted as an algebraic formalization of such a logic. The aim of the present paper is to extend the concepts of ``sharp'' and ``flat'' operations to operators in skew orthomodular and strong skew orthomodular posets. We generalize the relation of commuting elements as well as the commutator to such posets and we present some important properties of these operators and their mutual relationships. Moreover, we show that if the poset in question is even Boolean then there can be defined a ternary operator sharing the identities of a Pixley term. Finally, under some weak conditions which are automatically satisfied in Boolean algebras we show some kind of adjointness for operators formalizing conjunction and implication, respectively.
\end{abstract}
	
{\bf AMS Subject Classification:} 06A11, 06C15, 03G12
	
{\bf Keywords:} Orthoposet, skew orthomodular poset, strong skew orthomodular poset, Boolean poset, compatibility relation, commutator, sharp operator, flat operator, Pixley operator, adjointness

\section{Introduction}

It is a characteristic feature of quantum mechanics that there do exist experimentally testable propositions on quantum physical systems that cannot be tested simultaneously. In this respect, the logic of quantum mechanics differs a lot from classical physics. Such propositions in the logic of quantum mechanics are represented in orthomodular lattices, which are believed to be an algebraic representation of families of experimentally testable propositions about quantum systems, i.e.\ by elements that are compatible, see e.g.\ \cite B, \cite{DP}, \cite{PP} and \cite P. Therefore conjunctions and disjunctions introduced by G.~Birkhoff and J.~von~Neumann \cite{Bv} should be represented by meets and joins of pairwise compatible elements, in order to be able to confirm or falsify the corresponding propositions by means of experiments. Hence, conjunctions and disjunctions of two propositions in quantum systems could be represented by other operations in orthomodular lattices that should coincide with meets and joins of pairwise compatible elements. In his paper \cite P Pykacz introduced such operations called ``sharp'' and ``flat'' as certain terms in orthomodular lattices.

However, there is also another problem connected with the logic of quantum mechanics, namely joins and meets of elements of the lattice in question need not be defined provided the entries are neither comparable nor orthogonal. One way how to overcome this problem is to consider orthomodular posets instead of an orthomodular lattices. Of course, then the theory is more complicated and not so many papers are devoted to such posets. Posets of this kind are treated e.g.\ in \cite{CL} and \cite{PP}. Hence, it is a natural question how to transform the sharp and flat operations introduced by J.~Pykacz to posets with complementation, in particular to posets which are skew orthomodular or even orthomodular. Our present paper is devoted to this study. Notice that skew orthomodular posets were introduced by the authors in \cite{CL}.

\section{Basic concepts}

We start by recalling several concepts connected with posets.

In the following we identify singletons with their unique element.

Let $\mathbf P=(P,\le)$ be a poset, $a,b\in P$ and $A,B\subseteq P$. We define
\begin{align*}
A\le B & \text{ if }x\le y\text{ for all }x\in A\text{ and all }y\in B, \\
  L(A) & :=\{x\in P\mid x\le A\}, \\
  U(A) & :=\{x\in P\mid A\le x\}.
\end{align*}
Instead of $L(\{a,b\})$, $L(A\cup\{a\})$, $L(A\cup B)$ and $L\big(U(A)\big)$ we simply write $L(a,b)$, $L(A,a)$, $L(A,B)$ and $LU(A)$, respectively. Similarly we proceed in analogous cases. If $\mathbf P$ is bounded then $L(A),U(A)\ne\emptyset$. We say that $\mathbf P$ satisfies the {\em Ascending Chain Condition {\rm(ACC)}} or the {\em Descending Chain Condition {\rm(DCC)}} if $\mathbf P$ has no infinite ascending  or descending chain, respectively. Moreover, let {\em $\Min A$} and {\em $\Max A$} denote the set of all minimal and maximal elements of $A$, respectively. If $\mathbf P$ satisfies the ACC and the DCC and $A\ne\emptyset$ then $\Min A,\Max A\ne\emptyset$.

By an {\em orthoposet} we mean a bounded poset $(P,\le,{}',0,1)$ with an antitone involution $'$ that is a complementation. For $a,b\in P$ we write $a\perp b$ if $a\le b'$. In this case we call $a$ and $b$ {\em orthogonal} to each other. In orthoposets the conditions (ACC) and (DCC) are equivalent. Recall that an {\em orthomodular poset} is an orthoposet $(P,\le,{}',0,1)$ having the property that $x\vee y$ exists provided $x\perp y$ and satisfying one of the following equivalent conditions:
\begin{align*}
x\le y & \text{ implies }y=x\vee(y\wedge x'), \\
x\le y & \text{ implies }x=y\wedge(x\vee y')
\end{align*}
for all $x,y\in P$. An {\em orthomodular lattice} is an orthomodular poset that is a lattice.

\section{Skew orthomodular posets}

In \cite{CKL} together with M.~Kola\v r\'ik we introduced the following notion:

A {\em skew orthomodular poset} is an orthoposet $(P,\le,{}',0,1)$ satisfying one of the following equivalent conditions:
\begin{align*}
x\le y & \text{ implies }U(y)=U\big(x,L(y,x')\big), \\
x\le y & \text{ implies }L(x)=L\big(y,U(x,y')\big)
\end{align*}
for all $x,y\in P$, i.e.\ the {\em skew orthomodular law}. Skew orthomodular posets were originally introduced by the present authors under the name {\em generalized orthomodular posets} in \cite{CL}.

The following elementary lemma shows that a skew orthomodular poset cannot contain a sublattice of the form depicted in Fig.~1.

\vspace*{-4mm}

\begin{center}
\setlength{\unitlength}{7mm}
\begin{picture}(6,8)
\put(3,1){\circle*{.3}}
\put(1,3){\circle*{.3}}
\put(5,3){\circle*{.3}}
\put(1,5){\circle*{.3}}
\put(5,5){\circle*{.3}}
\put(3,7){\circle*{.3}}
\put(3,1){\line(-1,1)2}
\put(3,1){\line(1,1)2}
\put(3,7){\line(-1,-1)2}
\put(3,7){\line(1,-1)2}
\put(1,3){\line(0,1)2}
\put(5,3){\line(0,1)2}
\put(2.85,.3){$0$}
\put(.35,2.85){$a$}
\put(5.4,2.85){$b'$}
\put(.35,4.85){$b$}
\put(5.4,4.85){$a'$}
\put(2.85,7.4){$1$}
\put(2.2,-.75){{\rm Fig.~1}}
\end{picture}
\end{center}

\vspace*{4mm}

\begin{lemma}
Let $(P,\le,{}',0,1)$ be a skew orthomodular poset and $a,b\in P$ with $a\le b$. Then the following holds:
\begin{enumerate}[{\rm(i)}]
\item If $L(b,a')=0$ then $a=b$,
\item if $U(a,b')=1$ then $a=b$.
\end{enumerate}
\end{lemma}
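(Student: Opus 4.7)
The plan is to apply the two equivalent forms of the skew orthomodular law directly, using the hypotheses to simplify the resulting sets to something trivial.

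For part (i), since $a\le b$, the first form of the skew orthomodular law gives
\[
U(b)=U\bigl(a,L(b,a')\bigr).
\]
The hypothesis $L(b,a')=0$ (interpreted, per the paper's convention, as the singleton $\{0\}$) turns the right-hand side into $U(a,0)$. Because $0$ is the bottom element, $U(0)=P$, so $U(a,0)=U(a)\cap U(0)=U(a)$. Thus $U(a)=U(b)$. Combined with $a\le b$ this forces $b\le a$ (since $a\in U(a)=U(b)$ means $b\le a$), and hence $a=b$.

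Part (ii) is completely dual. Using the second equivalent form of the skew orthomodular law with $a\le b$ gives
\[
L(a)=L\bigl(b,U(a,b')\bigr),
\]
and the hypothesis $U(a,b')=1$ collapses the right-hand side to $L(b,1)=L(b)\cap L(1)=L(b)$. So $L(a)=L(b)$; together with $a\le b$ this yields $a=b$ by the same argument as above (now $b\in L(b)=L(a)$ gives $b\le a$).

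There is really no obstacle here: the proof is a one-line substitution into each form of the skew orthomodular law, followed by the observation that intersecting with $U(0)$ or $L(1)$ is a no-op in a bounded poset. The only care needed is the notational convention identifying the singleton $\{0\}$ with $0$ and $\{1\}$ with $1$, which the paper has already made explicit.
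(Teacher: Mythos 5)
Your proof is correct and follows essentially the same route as the paper: substitute the hypothesis into the appropriate form of the skew orthomodular law to get $U(a)=U(b)$ (resp.\ $L(a)=L(b)$) and conclude $a=b$, with part (ii) obtained by duality. You spell out the final step $U(a)=U(b)\Rightarrow a=b$, which the paper leaves implicit, but there is no substantive difference.
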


\begin{proof}
\
\begin{enumerate}[(i)]
\item Using skew orthomodularity we obtain $U(b)=U\big(a,\L(b,a')\big)=U(a,0)=U(a)$.
\item follows from (i) by duality.
\end{enumerate}
\end{proof}

Of course, not every orthoposet is orthomodular. However, we can easily show the connection between orthomodular posets and skew orthomodular posets.

\begin{lemma}
Let $\mathbf P=(P,\le,{}',0,1)$ be an orthoposet such that $x\vee y$ is defined provided $x\perp y$. Then $\mathbf P$ is an orthomodular poset if and only if it is a skew orthomodular poset.
\end{lemma}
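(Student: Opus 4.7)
The plan is to prove both implications, exploiting the hypothesis that orthogonal joins always exist to bridge the poset-level formulation of skew orthomodularity with the lattice-style formulation of orthomodularity.

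For the direction ``orthomodular $\Rightarrow$ skew orthomodular'', I would fix $x\le y$ and verify $U(y)=U\big(x,L(y,x')\big)$. The inclusion $\subseteq$ is immediate: every $z\ge y$ dominates $x$ and every element of $L(y,x')\subseteq L(y)$. For $\supseteq$, I would use that orthomodularity guarantees the meet $y\wedge x'$ to exist, and since $y\wedge x'\in L(y,x')$, any $z\in U\big(x,L(y,x')\big)$ satisfies $z\ge x$ and $z\ge y\wedge x'$; as $(y\wedge x')\perp x$ the orthogonal join $(y\wedge x')\vee x$ exists and equals $y$ by the orthomodular law, so $z\ge y$.

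For the converse ``skew orthomodular $\Rightarrow$ orthomodular'', I would first establish that $y\wedge x'$ exists whenever $x\le y$. The key observation is that $x\le y$ yields $y'\le x'$, so $y'\perp x$ and the hypothesis supplies $y'\vee x$; its complement $m:=(y'\vee x)'$ is then, by a routine De Morgan style computation using the antitone involution, the greatest lower bound of $\{y,x'\}$, i.e.\ $m=y\wedge x'$. With $m$ in hand, $L(y,x')=L(m)$, hence $U\big(x,L(y,x')\big)=U(x,m)=U(x\vee m)$, where the last step uses $x\perp m$ together with the orthogonal-join hypothesis. Skew orthomodularity then yields $U(y)=U(x\vee m)$, forcing $y=x\vee m=x\vee(y\wedge x')$, which is the first form of the orthomodular law.

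The main subtlety is in the second direction: skew orthomodularity is formulated purely in terms of the order and therefore does not by itself produce any meets, so one really needs the orthogonal-join hypothesis to manufacture $y\wedge x'$ as the complement of $y'\vee x$. Once that existence issue is settled, the identifications $L(y,x')=L(m)$ and $U(x,m)=U(x\vee m)$ are automatic and the equivalence collapses to a short computation.
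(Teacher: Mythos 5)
Your proposal is correct and rests on exactly the same ingredients as the paper's proof: from $x\le y$ one gets $x\perp y'$, hence $y'\vee x$ exists and its complement is $y\wedge x'$, which is orthogonal to $x$, and then $U\big(x,L(y,x')\big)=U(x,y\wedge x')=U\big(x\vee(y\wedge x')\big)$, so that skew orthomodularity at $(x,y)$ is equivalent to $y=x\vee(y\wedge x')$. The paper packages this as one chain of equivalences rather than two separate implications, but the argument is the same.
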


\begin{proof}
Let $a,b\in P$ with $a\le b$. Then any of the following statements implies the next one: $a\le b$, $a\perp b'$, $a\vee b'$ is defined, $a'\wedge b$ is defined, $a'\wedge b\perp a$, $a\vee(a'\wedge b)$ is defined. Now
\[
U\big(a,L(b,a')\big)=U\big(a,L(b\wedge a')\big)=U(a,b\wedge a')=U\big(a\vee(b\wedge a')\big).
\]
Hence the following are equivalent: $b=a\vee(b\wedge a')$, $U(b)=U\big(a\vee(b\wedge a')\big)$, $U(b)=U\big(a,L(b,a')\big)$.	
\end{proof}

Strengthening the definition of a skew orthomodular poset we define

\begin{definition}
A {\em strong skew orthomodular poset} is an orthoposet $(P,\le,{}',0,1)$ satisfying one of the following equivalent conditions:
\begin{align*}
A\le y & \text{ implies }U(y)=U\big(A,L(y,A')\big), \\
x\le A & \text{ implies }L(x)=L\big(A,U(x,A')\big)
\end{align*}
for all $A\subseteq P$ and all $x,y\in P$ where $A':=\{x'\mid x\in A\}$.
\end{definition}

Similarly as in the case of skew orthomodular posets, also strong skew orthomodular posets cannot contain a sublattice of the form depicted in Fig.~1.

\begin{lemma}
Let $(P,\le,{}',0,1)$ be a strong skew orthomodular poset, $A\subseteq P$ and $a\in P$. Then the following holds:
\begin{enumerate}[{\rm(i)}]
\item If $A\le a$ and $L(a,A')=0$ then $\bigvee A=a$,
\item if $a\le A$ and $U(a,A')=1$ then $\bigwedge A=a$.
\end{enumerate}
\end{lemma}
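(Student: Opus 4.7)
The plan is to mimic the proof of the earlier analogous lemma for skew orthomodular posets, but now using the stronger axiom that allows the subset $A$ in place of a single element on the left side of the inequality. The key observation is that the strong skew orthomodular law immediately reduces the hypothesis $L(a,A')=0$ to an equality of upper cones, which is exactly what is needed to identify $a$ as the join of $A$.

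For part (i), I would first apply the strong skew orthomodular law with $y:=a$, which is legitimate since we assume $A\le a$. This gives
\[
U(a)=U\bigl(A,L(a,A')\bigr).
\]
Substituting the hypothesis $L(a,A')=0$ and noting that adjoining $0$ to the set of lower bounds on the right does not affect its upper cone, we obtain $U(a)=U(A,0)=U(A)$. Now $a\in U(A)$ by the assumption $A\le a$, so $a$ is itself an upper bound of $A$; and every $u\in U(A)=U(a)$ satisfies $a\le u$, so $a$ is the least such upper bound. Therefore $\bigvee A=a$.

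Part (ii) then follows purely formally by duality, since the dual of a strong skew orthomodular poset is again a strong skew orthomodular poset (the two conditions in the definition are mutually dual).

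There is really no obstacle here beyond checking that the set-theoretic manipulation $U(A,0)=U(A)$ is valid, which holds because $0$ is the bottom element and so $U(\{0\})=P$ contributes nothing to an intersection defining $U(A,0)$. The proof is essentially a one-line reduction to the definition once the correct instance of the axiom is chosen, and the main point to highlight is that the strong version of the axiom is exactly what permits the argument to go through for an arbitrary subset $A$ rather than a single element.
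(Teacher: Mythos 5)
Your proposal is correct and follows exactly the paper's own argument: apply the strong skew orthomodular law to get $U(a)=U\bigl(A,L(a,A')\bigr)=U(A,0)=U(A)$, conclude that $a$ is the least upper bound of $A$, and obtain (ii) by duality. You merely make explicit the final step from $U(a)=U(A)$ to $\bigvee A=a$, which the paper leaves implicit.
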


\begin{proof}
\
\begin{enumerate}[(i)]
\item Using strong skew orthomodularity we compute $U(a)=U\big(A,\L(a,A')\big)=U(A,0)=U(A)$.
\item follows from (i) by duality.
\end{enumerate}
\end{proof}

The next results show the role of antichains of maximal respectively minimal elements in upper respectively lower cones and in posets with an antitone involution.

\begin{lemma}\label{lem2}
Let $\mathbf P=(P,\le)$ be a poset and $A\subseteq P$. Then the following holds:
\begin{enumerate}[{\rm(i)}]
\item If $\mathbf P$ satisfies the {\rm ACC} then $U(A)=U(\Max A)$,		
\item if $\mathbf P$ satisfies the {\rm DCC} then $L(A)=L(\Min A)$.
\end{enumerate}
\end{lemma}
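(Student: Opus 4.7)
The plan is to prove (i) directly and obtain (ii) by duality. The inclusion $U(A)\subseteq U(\Max A)$ is immediate from $\Max A\subseteq A$: anything above every element of $A$ is in particular above every maximal element of $A$. The real content is the reverse inclusion $U(\Max A)\subseteq U(A)$, and this is where the ACC must be used.

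The key auxiliary fact I would isolate is: under the ACC, every element $a\in A$ lies below some $m\in\Max A$. I would prove this by contradiction. If no such $m$ exists, then in particular $a$ itself is not maximal in $A$, so there is some $a_1\in A$ with $a<a_1$; by the same assumption $a_1$ is not maximal in $A$, yielding $a_2\in A$ with $a_1<a_2$; iterating produces an infinite strictly ascending chain $a<a_1<a_2<\cdots$ in $A\subseteq P$, which contradicts ACC.

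Once this auxiliary fact is at hand, the reverse inclusion is routine: given $x\in U(\Max A)$ and any $a\in A$, choose $m\in\Max A$ with $a\le m$; since $m\le x$, transitivity gives $a\le x$. As $a\in A$ was arbitrary, $x\in U(A)$. Combining both inclusions yields (i). Part (ii) follows by applying (i) to the dual poset $(P,\ge)$, under which ACC becomes DCC, $U$ becomes $L$, and $\Max$ becomes $\Min$.

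I do not expect any real obstacle here; the only subtlety worth mentioning is the edge case $A=\emptyset$, where $\Max A=\emptyset$ and both $U(A)$ and $U(\Max A)$ equal $P$, so the statement holds trivially. The argument above makes essential use of ACC only in the auxiliary fact, and the construction of the ascending chain works precisely because we are choosing successors inside $A$, so that the chain lies in $P$ and ACC applies.
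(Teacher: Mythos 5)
Your proposal is correct and follows essentially the same route as the paper: the easy inclusion from $\Max A\subseteq A$, the ACC-based fact that every element of $A$ lies below some maximal element of $A$ (which the paper invokes without spelling out the chain-construction you supply), and duality for (ii).
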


\begin{proof}
\
\begin{enumerate}[(i)]
\item Assume $\mathbf P$ to satisfy the ACC. Since $\Max A\subseteq A$ we have $U(A)\subseteq U(\Max A)$. Conversely, let $a\in U(\Max A)$. If $b\in A$ then according to the ACC there exists some $c\in\Max A$ with $b\le c$ and hence $b\le c\le a$ implying $b\le a$. This shows $a\in U(A)$ and hence $U(\Max A)\subseteq U(A)$.
\item follows by duality.
\end{enumerate}
\end{proof}

The proof of the following lemma is straightforward.

\begin{lemma}\label{lem4}
If $(P,\le,{}')$ is a poset with an antitone involution and $A\subseteq P$ then $(\Max A)'=\Min A'$ and $(\Min A)'=\Max A'$.	
\end{lemma}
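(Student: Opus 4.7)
The plan is to unpack the definitions of $\Max$ and $\Min$ and use the two defining features of an antitone involution on a poset: that $'$ is order-reversing, so $x\le y$ iff $y'\le x'$, and that $''=\id$, so $A''=A$ for every $A\subseteq P$.

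First I would verify one inclusion of the first identity. Take $a\in\Max A$, so $a\in A$ and no $b\in A$ satisfies $a<b$. Then $a'\in A'$, and if some $c\in A'$ satisfied $c<a'$, writing $c=b'$ with $b\in A$ and using antitonicity would give $a=a''<c'=b\in A$, contradicting maximality of $a$. Hence $a'\in\Min A'$, which yields $(\Max A)'\subseteq\Min A'$.

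For the reverse inclusion, I would take $c\in\Min A'$; write $c=b'$ for some $b\in A$ and check that $b\in\Max A$ by the same type of argument in reverse. Then $c=b'\in(\Max A)'$, giving $\Min A'\subseteq(\Max A)'$. This establishes $(\Max A)'=\Min A'$.

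The second identity $(\Min A)'=\Max A'$ is then immediate by applying the first identity to $A'$ in place of $A$: it reads $(\Max A')'=\Min A''=\Min A$, and applying $'$ to both sides (using $''=\id$) yields $\Max A'=(\Min A)'$. There is no real obstacle — the only thing to keep track of carefully is the involutivity step, which is what allows converting inclusions between $\Max A$, $\Min A'$ and their primed counterparts into equalities.
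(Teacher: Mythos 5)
Your proof is correct and is exactly the straightforward verification the paper omits (the authors only remark that the proof is straightforward): unwinding the definitions of $\Max$ and $\Min$, using antitonicity to reverse strict inequalities (strictness being preserved since an involution is injective), and deducing the second identity from the first by substituting $A'$ and applying $''=\id$. No issues.
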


\begin{corollary}\label{cor1}
Let $\mathbf P=(P,\le,{}',0,1)$ be an orthoposet satisfying the {\rm ACC}. Then $\mathbf P$ is a strong skew orthomodular poset if and only if $U(a)=U\big(A,L(a,A')\big)$ for every antichain $A$ of $(P,\le)$ and every $a\in U(A)$.	
\end{corollary}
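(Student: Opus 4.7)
The forward implication is immediate: applying the defining identity of a strong skew orthomodular poset in particular to antichains $A$ and elements $y\in U(A)$ (i.e.\ to the case $A\le y$) already yields the displayed formula. The substantive content lies in the converse.

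For the converse I would take an arbitrary $A\subseteq P$ and $y\in P$ with $A\le y$ (the case $A=\emptyset$ reduces to $U(y)=UL(y)$, which is trivial, so assume $A$ is nonempty) and reduce to the antichain case by replacing $A$ with its set of maximal elements $A^\ast:=\Max A$. By construction $A^\ast$ is an antichain, and it is nonempty by the ACC. Since $A^\ast\subseteq A\le y$ we have $y\in U(A^\ast)$, so the antichain hypothesis delivers
\[
U(y)=U\bigl(A^\ast,L(y,(A^\ast)')\bigr),
\]
and the task is to show that the right-hand side coincides with $U\bigl(A,L(y,A')\bigr)$.

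For this I would combine three ingredients. First, Lemma~\ref{lem2}(i) together with the ACC gives $U(A)=U(A^\ast)$. Second, Lemma~\ref{lem4} identifies $(A^\ast)'=(\Max A)'=\Min A'$. Third, since the ACC is equivalent to the DCC in any orthoposet, Lemma~\ref{lem2}(ii) applied to $A'$ yields $L(A')=L(\Min A')=L((A^\ast)')$, whence $L(y,A')=L(y,(A^\ast)')$. Setting $B:=L(y,A')$, it therefore suffices to verify $U(A^\ast\cup B)=U(A\cup B)$: the inclusion $\supseteq$ is immediate from $A^\ast\subseteq A$, and conversely any $c$ in the left-hand set satisfies $c\ge A^\ast$, hence $c\in U(A^\ast)=U(A)$, so combined with $c\ge B$ this gives $c\in U(A\cup B)$. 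The only delicate point is the bookkeeping between $\Max A$ and $\Min A'$ under the involution, and invoking Lemma~\ref{lem2} in the appropriate direction on each side (ACC for the upper cones of $A$, DCC for the lower cones of $A'$); beyond that no further ideas are required.
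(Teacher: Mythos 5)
Your argument is correct and essentially identical to the paper's proof: both reduce the general case to the antichain case by replacing $A$ with $\Max A$, using Lemma~\ref{lem2} (with the ACC for $U(A)=U(\Max A)$ and the equivalent DCC for $L(A')=L(\Min A')$) together with Lemma~\ref{lem4} to identify $(\Max A)'=\Min A'$, so that $U\big(A,L(a,A')\big)=U\big(\Max A,L(a,(\Max A)')\big)$ and $U(A)=U(\Max A)$. The only cosmetic difference is that the paper phrases this as a single chain of equalities establishing the equivalence of the two quantified statements at once, whereas you split off the (trivial) forward direction separately.
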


\begin{proof}
According to Lemma~\ref{lem2} and \ref{lem4} we have
\begin{align*}
U\big(A,L(a,A')\big) & =U(A)\cap U\big(L(a)\cap L(A')\big)=U(\Max A)\cap U\big(L(a)\cap L(\Min A')\big)= \\
                     & =U(\Max A)\cap U\Big(L(a)\cap L\big((\Max A)'\big)\Big)=U\Big(\Max A,L\big(a,(\Max A)'\big)\Big),
\end{align*}
and $\Max A$ is an antichain of $(P,\le)$. Hence using Lemma~\ref{lem2} the following are equivalent:
\begin{align*}
U(a) & =U\big(A,L(a,A')\big)\text{ for all }A\subseteq P\text{ and all }a\in U(A), \\
U(a) & =U\Big(\Max A,L\big(a,(\Max A)'\big)\Big)\text{ for all }A\subseteq P\text{ and all }a\in U(\Max A).
\end{align*}
\end{proof}

Next we present some examples of strong skew orthomodular posets.

\begin{example}\label{ex1}
Let $A$ and $A'$ be disjoint sets of the same cardinality and $\,'$ a bijection from $A$ to $A'$, assume $0,1\notin A\cup A'$ and put $P:=A\cup A'\cup\{0,1\}$. Extend $\,'$ to $P$ by defining $0':=1$, $1':=0$ and $(x')':=x$ for all $x\in A$. Define a binary relation $\le$ on $P$ by $x\le y$ if $x=y$ or $x=0$ or $y=1$ or {\rm(}$x\in A$ and $y\in A'\setminus\{x'\}${\rm)}. Then $\mathbf P:=(P,\le,{}',0,1)$ is a strong skew orthomodular poset. This can be seen as follows: Obviously, $\mathbf P$ is an orthoposet satisfying the {\rm ACC}. We apply Corollary~\ref{cor1}. The poset $(P,\le)$ has the following antichains: all subsets of $P$ of cardinality $\le1$, all subsets of $A$, all subsets of $A'$ and all sets of the form $\{x,x'\}$ with $x\in A$. Let now $B$ be a antichain of $(P,\le)$ and $a\in U(B)$. \\
If $B=\emptyset$ then $U\big(B,L(a,B')\big)=U\big(\emptyset,L(a,\emptyset)\big)=UL(a)=U(a)$, \\
if $B=\{0\}$ then $U\big(B,L(a,B')\big)=U\big(0,L(a,1)\big)=UL(a)=U(a)$, \\
if $B=\{b\}$ with $b\in A$ and $a=b$ then $U\big(B,L(a,B')\big)=U\big(b,L(b,b')\big)=U(b)$, \\
if $B=\{b\}$ with $b\in A$ and $a=c'$ with $c\in A\setminus\{b\}$ then $U\big(B,L(a,B')\big)=U\big(b,L(c',b')\big)=U(b,A\setminus\{b,c\})=U(A\setminus\{c\})=U(c')$, \\
if $B=\{b\}$ with $b\in A$ and $a=1$ then $U\big(B,L(a,B')\big)=U\big(b,L(1,b')\big)=U(b,b')=U(1)$, \\
if $B=\{b'\}$ with $b\in A$ and $a=b'$ then $U\big(B,L(a,B')\big)=U\big(b',L(b',b)\big)=U(b')$, \\
if $B=\{b'\}$ with $b\in A$ and $a=1$ then $U\big(B,L(a,B')\big)=U\big(b',L(1,b)\big)=U(b',b)=U(1)$, \\
if $B=\{1\}$ and $a=1$ then $U\big(B,L(a,B')\big)=U\big(1,L(1,0)\big)=U(1)$, \\
if $B=\{b,b'\}$ with $b\in A$ and $a=1$ then $U\big(B,L(a,B')\big)==U\big(b,b',L(1,b',b)\big)=U(1)$, \\
if $B\subseteq A$, $|B|\ge2$ and $a=b'$ with $b\in A\setminus B$ then $U\big(B,L(a,B')\big)==U\big(B,A\setminus(B\cup\{b\})\big)=U(A\setminus\{b\})=U(b')$, \\
if $B\subseteq A$, $|B|\ge2$ and $a=1$ then $U\big(B,L(a,B')\big)=U\big(B,L(1,B')\big)=U(B,A\setminus B)=U(A)=U(1)$, \\
if $B\subseteq A'$, $|B|\ge2$ and $a=1$ then $U\big(B,L(a,B')\big)=U(1)$. \\
If $|A|\in\{0,1,3\}$ then $\mathbf P$ is a Boolean algebra, if $|A|=2$, say $A=\{a,b\}$, then $\mathbf P$ is a lattice, but not orthomodular since $a\le b'$, but $a\vee(b'\wedge a')=a\vee0=a\ne b'$. If $|A|\ge4$ then $\mathbf P$ is not an orthomodular poset since $c'$ and $d'$ are distinct minimal upper bounds of the mutually orthogonal elements $a$ and $b$ provided $a,b,c,d$ are four pairwise distinct elements of $A$.
\end{example}

The following strong skew orthomodular poset that is neither orthomodular nor a lattice is a particular case of the poset from Example~\ref{ex1} and it is visualized in Fig.~2.

\vspace*{-4mm}

\begin{center}
\setlength{\unitlength}{7mm}
\begin{picture}(8,8)
\put(4,1){\circle*{.3}}
\put(1,3){\circle*{.3}}
\put(3,3){\circle*{.3}}
\put(5,3){\circle*{.3}}
\put(7,3){\circle*{.3}}
\put(1,5){\circle*{.3}}
\put(3,5){\circle*{.3}}
\put(5,5){\circle*{.3}}
\put(7,5){\circle*{.3}}
\put(4,7){\circle*{.3}}
\put(4,1){\line(-3,2)3}
\put(4,1){\line(-1,2)1}
\put(4,1){\line(1,2)1}
\put(4,1){\line(3,2)3}
\put(4,7){\line(-3,-2)3}
\put(4,7){\line(-1,-2)1}
\put(4,7){\line(1,-2)1}
\put(4,7){\line(3,-2)3}
\put(1,3){\line(0,1)2}
\put(1,3){\line(1,1)2}
\put(1,3){\line(2,1)4}
\put(3,3){\line(-1,1)2}
\put(3,3){\line(0,1)2}
\put(3,3){\line(2,1)4}
\put(5,3){\line(-2,1)4}
\put(5,3){\line(0,1)2}
\put(5,3){\line(1,1)2}
\put(7,3){\line(-2,1)4}
\put(7,3){\line(-1,1)2}
\put(7,3){\line(0,1)2}
\put(3.85,.3){$0$}
\put(.35,2.85){$a$}
\put(2.35,2.85){$b$}
\put(5.4,2.85){$c$}
\put(7.4,2.85){$d$}
\put(.35,4.85){$d'$}
\put(2.35,4.85){$c'$}
\put(5.4,4.85){$b'$}
\put(7.4,4.85){$a'$}
\put(3.85,7.4){$1$}
\put(3.2,-.75){{\rm Fig.~2}}
\end{picture}
\end{center}

\vspace*{4mm}

\section{Sharp and flat operators in posets}

In the following let $(P,\le,{}')$ be a poset with an antitone involution. We introduce the following operators on $P$ respectively binary relations on $P$:
\begin{align*}
        S(x,y) & :=U\big(L(x,y),L(x,y'),L(x',y)\big), \\
        F(x,y) & :=L\big(U(x,y),U(x,y'),U(x',y)\big), \\
        c(x,y) & :=U\big(L(x,y),L(x,y'),L(x',y),L(x',y')\big), \\
x\mathrel{\C}y & \text{ if }U(x)=U\big(L(x,y),L(x,y')\big)
\end{align*}
for all $x,y\in P$. Using the analogous terminology for lattices introduced by J.~Pykacz in \cite P, we call $S$ ``sharp operator'' and $F$ ``flat operator''. The operator $c$ is called ``commutator''. Analogously as in \cite B and \cite K, the relation $C$ is called ``compatibility relation'' and $a\in P$ is said to be compatible with $b\in P$ provided $a\mathrel{\C}b$.

Recall that a poset $(P,\le)$ is called {\em distributive} if it satisfies one of the following equivalent identities:
\begin{align*}
 L\big(U(x,y),z\big) & \approx LU\big(L(x,z),L(y,z)\big), \\
UL\big(U(x,y),z\big) & \approx U\big(L(x,z),L(y,z)\big), \\
 U\big(L(x,y),z\big) & \approx UL\big(U(x,z),U(y,z)\big), \\
LU\big(L(x,y),z\big) & \approx L\big(U(x,z),U(y,z)\big),
\end{align*}
and that a {\em Boolean poset} is a distributive orthoposet. The following lemma was proved in \cite{CKL}.

\begin{lemma}
Let $\mathbf P=(P,\le,{}',0,1)$ be a Boolean poset. Then the following holds:
\begin{enumerate}[{\rm(i)}]
\item $\mathbf P$ is a skew orthomodular poset,
\item $x\mathrel{\C}y$ for all $x,y\in P$.
\end{enumerate}
\end{lemma}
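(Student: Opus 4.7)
The plan is to derive both (i) and (ii) directly from the distributive identities applied to the pair $\{a,a'\}$ (respectively $\{y,y'\}$), using the standard facts that in an orthoposet $U(a,a')=\{1\}$ and $L(a,a')=\{0\}$ (consequences of the complementation axiom), together with the identities $U(X\cup Y)=U(X)\cap U(Y)$ and $U(LU(X))=U(X)$ that hold in any poset.

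For (i), let $a,b\in P$ with $a\le b$. I would start from the second of the four equivalent forms of distributivity, namely $LU\bigl(L(x,y),L(z,y)\bigr)\approx L\bigl(U(x,z),y\bigr)$, taking $x:=a$, $z:=a'$, $y:=b$. Since $U(a,a')=U(1)$, the right-hand side collapses to $L(1,b)=L(b)$. Since $a\le b$ gives $L(a,b)=L(a)$, the left-hand side becomes $LU\bigl(L(a),L(a',b)\bigr)$. Thus $L(b)=LU\bigl(L(a),L(a',b)\bigr)$. Applying $U$ and using $U\circ LU=U$ yields
\[
U(b)=U\bigl(L(a),L(a',b)\bigr)=U(a)\cap U\bigl(L(b,a')\bigr)=U\bigl(a,L(b,a')\bigr),
\]
which is exactly the skew orthomodular law.

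For (ii), the argument is a direct instance of the same trick with the roles reshuffled. Given arbitrary $x,y\in P$, take $a:=y$, $z:=y'$ in the same distributive identity, with the second variable being $x$. Then $U(y,y')=U(1)$, so the right-hand side is $L(1,x)=L(x)$, while the left-hand side is $LU\bigl(L(y,x),L(y',x)\bigr)=LU\bigl(L(x,y),L(x,y')\bigr)$. Applying $U$ and again using $U\circ LU=U$ gives $U(x)=U\bigl(L(x,y),L(x,y')\bigr)$, which is precisely $x\mathrel{\C}y$.

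I do not anticipate a real obstacle here: the whole argument is a two-line calculation once one chooses the right form of the distributive law and recalls that in an orthoposet $U(a,a')=\{1\}$. The only thing to be slightly careful about is the bookkeeping between singletons and sets (e.g.\ $UL(a)=U(a)$ and $U(X\cup Y)=U(X)\cap U(Y)$), and the implicit use of the closure-operator property $U\circ LU=U$, which is what turns the distributive identity (a statement about $LU$) into the desired statement about upper cones.
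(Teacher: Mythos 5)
Your proof is correct. For part (i) it is essentially the paper's own argument: both proofs instantiate the distributive law on the complementary pair $\{a,a'\}$ and use $U(a,a')=U(1)$ to collapse one side; the paper happens to use the form $U\big(L(x,y),z\big)\approx UL\big(U(x,z),U(y,z)\big)$ in the chain $U(b)=UL\big(U(a,b),U(a,a')\big)=U\big(a,L(b,a')\big)$, while you use the (equivalent) form $L\big(U(x,z),y\big)\approx LU\big(L(x,y),L(z,y)\big)$ and then apply $U$ together with $U\circ LU=U$; these are interchangeable. (Minor point: the identity you quote is the first listed form, not the second, but since all four are equivalent this is immaterial.) The more substantive difference is in part (ii): the paper does not prove it at all but defers to the reference \cite{CKL}, whereas you give a direct two-line derivation by the same substitution trick, obtaining $L(x)=LU\big(L(x,y),L(x,y')\big)$ from $U(y,y')=U(1)$ and then applying $U$. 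That makes your write-up self-contained where the paper is not, at no extra cost.
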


\begin{proof}
\
\begin{enumerate}[(i)]
\item If $a,b\in P$ and $a\le b$ then
\begin{align*}
U(b) & =UL(b)=UL(b,1)=UL\big(U(b),U(1)\big)=UL\big(U(a,b),U(a,a')\big)= \\
     & =U\big(a,L(b,a')\big).
\end{align*}
\item was proved in \cite{CKL}.
\end{enumerate}
\end{proof}

A Boolean poset which is not a lattice is depicted in Fig.~3.

\vspace*{-4mm}

\begin{center}
\setlength{\unitlength}{7mm}
\begin{picture}(8,10)
\put(4,1){\circle*{.3}}
\put(1,3){\circle*{.3}}
\put(3,3){\circle*{.3}}
\put(5,3){\circle*{.3}}
\put(7,3){\circle*{.3}}
\put(1,7){\circle*{.3}}
\put(3,7){\circle*{.3}}
\put(5,7){\circle*{.3}}
\put(7,7){\circle*{.3}}
\put(4,9){\circle*{.3}}
\put(1,5){\circle*{.3}}
\put(7,5){\circle*{.3}}
\put(4,1){\line(-3,2)3}
\put(4,1){\line(-1,2)1}
\put(4,1){\line(1,2)1}
\put(4,1){\line(3,2)3}
\put(4,9){\line(-3,-2)3}
\put(4,9){\line(-1,-2)1}
\put(4,9){\line(1,-2)1}
\put(4,9){\line(3,-2)3}
\put(1,3){\line(0,1)4}
\put(1,3){\line(1,1)4}
\put(3,3){\line(-1,1)2}
\put(3,3){\line(1,1)4}
\put(5,3){\line(-1,1)4}
\put(5,3){\line(1,1)2}
\put(7,3){\line(-1,1)4}
\put(7,3){\line(0,1)4}
\put(1,5){\line(1,1)2}
\put(7,5){\line(-1,1)2}
\put(3.85,.3){$0$}
\put(.35,2.85){$a$}
\put(2.35,2.85){$b$}
\put(5.4,2.85){$c$}
\put(7.4,2.85){$d$}
\put(.35,4.85){$e$}
\put(7.4,4.85){$e'$}
\put(.35,6.85){$d'$}
\put(2.35,6.85){$c'$}
\put(5.4,6.85){$b'$}
\put(7.4,6.85){$a'$}
\put(3.85,9.4){$1$}
\put(3.2,-.75){{\rm Fig.~3}}
\end{picture}
\end{center}

\vspace*{4mm}

At first, we prove some basic properties of the compatibility relation, sharp and flat operators and the operator $c$ in an arbitrary orthoposet. In \cite P J.~Pykacz showed that if $a\mathrel{\C}b$ in an orthomodular lattice then $s(a,b)=a\vee b$ and $f(a,b)=a\wedge b$ for his operations $s$ and $f$. In the next lemma we show an analogous result for orthoposets, namely that if $a\mathrel{\C}b$ and $b\mathrel{\C}a$ then $S(a,b)=U(a,b)$, and if $a'\mathrel{\C}b'$ and $b'\mathrel{\C}a'$ then $F(a,b)=L(a,b)$.

\begin{proposition}\label{prop1}
Let $(P,\le,{}',0,1)$ be an orthoposet and $a,b\in P$. Then the following holds:
\begin{enumerate}[{\rm(i)}]
\item $c(a,b)=c(a,b')=c(a',b)=c(a',b')=c(b,a)=c(b,a')=c(b',a)=c(b',a')$,
\item $a\mathrel{\C}b$ if and only if $a\mathrel{\C}b'$,
\item $S(a,b)=S(b,a)$ and $F(a,b)=F(b,a)$,
\item $F(a,b)=\big(S(a',b')\big)'$ and $S(a,b)=\big(F(a',b')\big)'$,
\item $S(0,a)=S(a,0)=S(a,a)=U(a)$ and $S(a,a')=S(a,1)=S(1,a)=1$,
\item $F(0,a)=F(a,0)=F(a,a')=0$ and $F(a,a)=F(a,1)=F(1,a)=L(a)$,
\item $U(a,b)\subseteq S(a,b)$ and $L(a,b)\subseteq F(a,b)$,
\item $c(a,b)=S(a,b)\cap UL(a',b')$ and $\big(c(a,b)\big)'=F(a,b)\cap LU(a',b')$,
\item if $a\le b$ then $a\mathrel{\C}b$, $b'\mathrel{\C}a'$, $c(a,b)=S(a,b)\cap U(b')$ and $\big(c(a,b)\big)'=F(a,b)\cap L(a')$,
\item if $a\perp b$ then $S(a,b)=U(a,b)$, and if $a'\perp b'$ then $F(a,b)=L(a,b)$,
\item if $a\mathrel{\C}b$ and $b\mathrel{\C}a$ then $S(a,b)=U(a,b)$,
\item if $a'\mathrel{\C}b'$ and $b'\mathrel{\C}a'$ then $F(a,b)=L(a,b)$,
\item if $a\mathrel{\C}b$ and $a'\mathrel{\C}b'$ then $c(a,b)=1$.
\end{enumerate}
\end{proposition}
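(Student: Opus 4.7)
\emph{Plan.} All thirteen items will be handled by unfolding the definitions of $S$, $F$, $c$ and the compatibility relation $\mathrel{\C}$, and exploiting two basic facts: the identity $U(A\cup B)=U(A)\cap U(B)$ together with its dual $L(A\cup B)=L(A)\cap L(B)$, and the fact that $'$ is an antitone involution, so that $\bigl(U(X)\bigr)'=L(X')$ and $\bigl(L(X)\bigr)'=U(X')$. Most of the statements then reduce to bookkeeping on these cone identities.

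First I would dispose of the symmetry and duality items. Part (i) is immediate: the four sets $L(a,b)$, $L(a,b')$, $L(a',b)$, $L(a',b')$ form the same unordered collection under every substitution listed, so their common upper cone is unchanged. Part (ii) is the two-set analogue of the same observation. Part (iii) is visible directly from the definitions, and part (iv) is obtained by applying $'$ pointwise to $S(a',b')$ and invoking the antitone involution identity above. Items (v) and (vi) reduce to direct substitution, using $L(0)=\{0\}$, $L(1)=P$, $L(a,a')=\{0\}$, $UL(a)=U(a)$ and their duals. Item (vii) follows since each of $L(a,b)$, $L(a,b')$, $L(a',b)$ is contained in either $L(a)$ or $L(b)$, so every $x\in U(a,b)=U(a)\cap U(b)$ lies in the upper cone of all three; the dual gives $L(a,b)\subseteq F(a,b)$. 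For (viii), I would split the four-set union defining $c(a,b)$ as three plus one and apply $U(A\cup B)=U(A)\cap U(B)$; the companion formula for $\bigl(c(a,b)\bigr)'$ then follows from (iv).

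The substantive content sits in items (ix) through (xiii). For (ix), if $a\le b$ then $L(a,b)=L(a)$, so $U\bigl(L(a,b),L(a,b')\bigr)=U(a)\cap UL(a,b')$, and since $L(a,b')\subseteq L(a)$ the second factor contains $UL(a)=U(a)$; this proves $a\mathrel{\C}b$, and $b'\mathrel{\C}a'$ follows by the same reasoning applied to $b'\le a'$. Since $a\le b$ also forces $L(a',b')=L(b')$, the formula in (viii) specialises to $c(a,b)=S(a,b)\cap U(b')$, and the dual gives the $F$-version. For (x), orthogonality $a\le b'$ yields $L(a,b')=L(a)$ and $L(a',b)=L(b)$, whence $S(a,b)=UL(a,b)\cap U(a)\cap U(b)$, and $UL(a,b)\supseteq U(a)\cap U(b)=U(a,b)$ collapses this to $U(a,b)$. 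For (xi) I would write both compatibility conditions and intersect their right-hand sides:
\[
U(a)\cap U(b)=U\bigl(L(a,b),L(a,b')\bigr)\cap U\bigl(L(a,b),L(a',b)\bigr)=U\bigl(L(a,b),L(a,b'),L(a',b)\bigr)=S(a,b),
\]
while the left side is $U(a,b)$; (xii) is dual. Finally, for (xiii), combining $a\mathrel{\C}b$ and $a'\mathrel{\C}b'$ and intersecting the unions that appear on their right-hand sides exactly assembles the four constituents of $c(a,b)$, giving $c(a,b)=U(a)\cap U(a')=U(a,a')=1$, the last equality being the definition of complementation.

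The main obstacle is nothing deep, just careful bookkeeping: in (xi) one must observe that $b\mathrel{\C}a$ supplies precisely the third summand $L(a',b)$ missing from the $a\mathrel{\C}b$ union, and in (xiii) one must recognise that combining $a\mathrel{\C}b$ with $a'\mathrel{\C}b'$ exactly produces the four sets making up $c(a,b)$, while the collapse $U(a,a')=1$ relies on the orthocomplement condition rather than on any form of orthomodularity.
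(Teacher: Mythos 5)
Your proof is correct and follows essentially the same route as the paper: items (i)--(viii) by direct unfolding of the cone identities $U(A\cup B)=U(A)\cap U(B)$ and the antitone involution, and items (ix)--(xiii) by the same computations the authors give (your (xiii) uses $a'\mathrel{\C}b'$ where the paper's proof body uses the equivalent $a'\mathrel{\C}b$, which changes nothing by (ii)). No gaps.
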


\begin{proof}
\
\begin{enumerate}
\item[(i)] -- (viii) are clear.
\item[(ix)] If $a\le b$ then $U(a)=UL(a)=U\big(L(a),L(a,b')\big)=U\big(L(a,b),L(a,b')\big)$, i.e.\ $a\mathrel{\C}b$, and $b'\le a'$ and hence $b'\mathrel{\C}a'$. The rest follows from (viii).
\item[(x)] If $a\perp b$ then $S(a,b)=U\big(L(a,b),L(a),L(b)\big)=U(a,b)$, and if $a'\perp b'$ then $F(a,b)=L\big(U(a,b),U(a),U(b)\big)=L(a,b)$.
\item[(xi)] If $a\mathrel{\C}b$ and $b\mathrel{\C}a$ then
\begin{align*}
S(a,b) & =U\big(L(a,b),L(a,b'),L(a',b)\big)=U\big(L(a,b),L(a,b'),L(a,b),L(a',b)\big)= \\
       & =U\big(L(a,b),L(a,b')\big)\cap U\big(L(a,b),L(a',b)\big)=U(a)\cap U(b)=U(a,b).
\end{align*}
\item[(xii)] follows from (xi) by duality.
\item[(xiii)] If $a\mathrel{\C}b$ and $a'\mathrel{\C}b$ then
\begin{align*} c(a,b) & =U\big(L(a,b),L(a,b'),L(a',b),L(a',b')\big)= \\
                   	  & =U\big(L(a,b),L(a,b')\big)\cap U\big(L(a',b),L(a',b')\big)=U(a)\cap U(a')=U(a,a')=1.
\end{align*}
\end{enumerate}
\end{proof}

If the orthoposet in question is even skew orthomodular, we can prove the following proposition.

\begin{proposition}\label{prop2}
Let $(P,\le,{}',0,1)$ be a skew orthomodular poset and $a,b\in P$ with $a\le b$. Then the following holds:
\begin{enumerate}[{\rm(i)}]
\item $S(a,b)=U(b)$ and $F(a,b)=L(a)$,
\item $c(a,b)=1$.
\end{enumerate}
\end{proposition}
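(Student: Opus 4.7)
The plan is to unfold the definitions of $S$ and $c$ and to invoke the skew orthomodular law exactly once, in the form
\[
U(b)=U\bigl(a,L(b,a')\bigr)=U(a)\cap U\bigl(L(a',b)\bigr),
\]
which already combines two of the three terms appearing in $S(a,b)$ when $a\le b$.

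For part~(i), I would expand
\[
S(a,b)=U\bigl(L(a,b)\bigr)\cap U\bigl(L(a,b')\bigr)\cap U\bigl(L(a',b)\bigr).
\]
Since $a\le b$, the first factor reduces to $UL(a)=U(a)$, and together with the third factor it collapses to $U(b)$ by the displayed identity. The middle factor $U(L(a,b'))$ already contains $U(b)$, because $L(a,b')\subseteq L(a)\subseteq L(b)$ reverses under upper cones to $U(L(a,b'))\supseteq U(L(b))=U(b)$. Hence $S(a,b)=U(b)$. The dual assertion $F(a,b)=L(a)$ is then immediate from Proposition~\ref{prop1}(iv), since $F(a,b)=(S(a',b'))'=(S(b',a'))'$ by the symmetry in Proposition~\ref{prop1}(iii); as $b'\le a'$, the $S$-part just proved yields $S(b',a')=U(a')$, and taking complements gives $L(a)$.

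For part~(ii), I would apply Proposition~\ref{prop1}(viii) to write $c(a,b)=S(a,b)\cap UL(a',b')$. Substituting $S(a,b)=U(b)$ from part~(i), and noting that $b'\le a'$ forces $L(a',b')=L(b')$ and therefore $UL(a',b')=U(b')$, we obtain
\[
c(a,b)=U(b)\cap U(b')=U(b,b')=\{1\},
\]
the final equality being the orthocomplementation property of the orthoposet.

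No step looks genuinely difficult; the entire argument is cone bookkeeping combined with a single invocation of the skew orthomodular law. The only point requiring care is the trivial but essential observation that $L(a',b)=L(b,a')$, which allows the skew orthomodular identity to be matched term-for-term with the factors in the definition of $S$.
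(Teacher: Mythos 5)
Your proof is correct and follows essentially the same route as the paper: the paper packages the two key facts as $a\mathrel{\C}b$ (from $a\le b$) and $b\mathrel{\C}a$ (from the skew orthomodular law) and then cites Proposition~\ref{prop1}(xi), (xii) and (ix), whereas you carry out the identical cone bookkeeping directly on the three factors of $S(a,b)$ and use De Morgan duality (Proposition~\ref{prop1}(iv)) for $F$. Both arguments invoke the skew orthomodular law exactly once and reduce (ii) to $U(b)\cap U(b')=1$, so the difference is only one of packaging.
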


\begin{proof}
\
\begin{enumerate}[(i)]
\item We have $a\mathrel{\C}b$ by (ix) of Proposition~\ref{prop1} and $b\mathrel{\C}a$ by (i) and hence $S(a,b)=U(a,b)=U(b)$ by (xi) of Proposition~\ref{prop1}. Moreover, we have $b'\le a'$ and hence $a'\mathrel{\C}b'$ by (i) and $b'\mathrel{\C}a'$ by (ix) of Proposition~\ref{prop1} which implies $F(a,b)=L(a,b)=L(a)$ by (xii) of Proposition~\ref{prop1}.
\item According to (ix) of Proposition~\ref{prop1} and (ii) we have $c(a,b)=S(a,b)\cap U(b')=U(b)\cap U(b')=1$.
\end{enumerate}
\end{proof}

The next result shows some connection between the compatibility relation and the commutator in a strong skew orthomodular poset. Moreover, we prove some results concerning the symmetry of the compatibility relation.

\begin{theorem}\label{th1}
Let $\mathbf P=(P,\le,{}',0,1)$ be a strong skew orthomodular poset and $a,b\in P$. Then the following are equivalent:
\begin{enumerate}[{\rm(i)}]
\item $a\mathrel{\C}b$,
\item $b\mathrel{\C}a$,
\item $c(a,b)=1$.
\end{enumerate}
\end{theorem}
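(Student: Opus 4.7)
By Proposition~\ref{prop1}(i), $c(a,b)=c(b,a)$, so (iii) is symmetric in $a$ and $b$. I would therefore set the proof up as a cycle (iii) $\Rightarrow$ (i) $\Rightarrow$ (ii) $\Rightarrow$ (iii), using strong skew orthomodularity for the first two implications and reducing the last to Proposition~\ref{prop1}(ii) and (xiii). For (iii) $\Rightarrow$ (i) put $N_1:=L(a,b)\cup L(a,b')$ and $N:=N_1\cup L(a',b)\cup L(a',b')$. Since $N_1\le a$, strong skew orthomodularity yields $U(a)=U\bigl(N_1,L(a,N_1')\bigr)$, and as $U(a)\subseteq U(N_1)$ is automatic, proving $a\mathrel{\C}b$ amounts to showing $L(a,N_1')=\{0\}$. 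Given $w\in L(a,N_1')$, the key observation is that $v\le a'$ is the same as $a\le v'$, so $w\le a\le v'$ for every $v\in L(a',b)\cup L(a',b')$; together with $w\le v'$ for $v\in N_1$ this forces $w'\in U(N)=c(a,b)=\{1\}$, hence $w=0$.

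For (i) $\Rightarrow$ (ii) I would run the analogous SSO reduction with respect to $b$, taking $A_1:=L(a,b)\cup L(a',b)\le b$ so that $U(b)=U\bigl(A_1,L(b,A_1')\bigr)$. For $u\in L(b,A_1')$ the dual observation $v\le b'\Leftrightarrow b\le v'$ gives $u\le b\le v'$ for every $v\in L(a,b')$, so $u\le v'$ holds for every $v\in N_1$. The assumption $a\mathrel{\C}b$ then places $u'$ in $U(N_1)=U(a)$, i.e., $u\le a'$; combined with $u\le b$ this means $u\in L(a',b)\subseteq A_1$, and the condition $u\le v'$ for all $v\in A_1$ applied to $v=u$ gives $u\le u'$, hence $u=0$. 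Thus $L(b,A_1')=\{0\}$ and $U(b)=U(L(a,b),L(a',b))$, which is (ii). The step (ii) $\Rightarrow$ (iii) is then formal: interchanging $a$ and $b$ in the preceding argument also yields (ii) $\Rightarrow$ (i), and combined with Proposition~\ref{prop1}(ii) this produces the chain $a\mathrel{\C}b\Leftrightarrow a\mathrel{\C}b'\Leftrightarrow b'\mathrel{\C}a\Leftrightarrow b'\mathrel{\C}a'\Leftrightarrow a'\mathrel{\C}b'$; so $a\mathrel{\C}b$ and $a'\mathrel{\C}b'$ hold simultaneously, and Proposition~\ref{prop1}(xiii) concludes $c(a,b)=1$.

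The main technical obstacle is spotting the orthogonality-propagation trick underlying both reductions: a bound of the form $w\le a$ (respectively $u\le b$) automatically forces $w\le v'$ for every $v\le a'$ (respectively $u\le v'$ for every $v\le b'$), by antitony of the involution. This is precisely what allows strong skew orthomodularity to collapse each residual set $L(a,N_1')$ or $L(b,A_1')$ down to $\{0\}$; once that collapse is in hand, everything else in the theorem is bookkeeping through Proposition~\ref{prop1}.
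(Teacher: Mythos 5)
Your proof is correct and takes essentially the same route as the paper: the cycle (iii)$\Rightarrow$(i)$\Rightarrow$(ii)$\Rightarrow$(iii), with the strong skew orthomodular law collapsing the residual lower cone to $\{0\}$ in the first two implications and Proposition~\ref{prop1}(ii),(xiii) finishing the last. The only cosmetic difference is in (i)$\Rightarrow$(ii), where the paper applies the law to $A=L(a,b)$ and rewrites $L\big(b,U(a',b')\big)$ as $L(a',b)$, whereas you apply it to $A=L(a,b)\cup L(a',b)$ and kill the residual directly; both arguments are valid.
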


\begin{proof}
$\text{}$ \\
(i) $\Rightarrow$ (ii): \\
Since $a\mathrel{\C}b$ we have
\begin{align*}
U(a,b') & =U(a)\cap U(b')=U\big(L(a,b),L(a,b')\big)\cap U(b')=U\big(L(a,b),L(a,b'),b'\big)= \\
        & =U\big(L(a,b),b'\big)
\end{align*}
and hence $L\big(U(a',b'),b\big)=L(a',b)$. Because $L(a,b)\le b$ and $\mathbf P$ is a strong skew orthomodular poset we conclude
\[
U(b)=U\Big(L(a,b),L\big(b,U(a',b')\big)\Big)=U\big(L(a,b),L(a',b)\big)
\]
and hence $b\mathrel{\C}a$. \\
(ii) $\Rightarrow$ (iii): \\
Using the fact that (i) implies (ii) and using (ii) of Proposition~\ref{prop1} we conclude $a\mathrel{\C}b'$, $b'\mathrel{\C}a$, $b'\mathrel{\C}a'$ and $a'\mathrel{\C}b'$. Now $a\mathrel{\C}b$ and $a'\mathrel{\C}b'$ together imply $c(a,b)=1$ according to (xiii) of Proposition~\ref{prop1}. \\
(iii) $\Rightarrow$ (i): \\
Since $L(a,b)\cup L(a,b')\le a$ we have
\[
U(a)=U\Big(L(a,b)\cup L(a,b'),L\big(a,U(a',b')\cup U(a',b)\big)\Big).
\]
Now
\[
L\big(a,U(a',b')\cup U(a',b)\big)\subseteq L\big(U(a,b),U(a,b'),U(a',b),U(a',b')\big)=\big(c(a,b)\big)'=1'=0
\]
and hence $L\big(a,U(a',b')\cup U(a',b)\big)=0$ whence
\[
U(a)=U\big(L(a,b)\cup L(a,b'),0\big)=U\big(L(a,b),L(a,b')\big),
\]
i.e.\ $a\mathrel{\C}b$.
\end{proof}

Conversely, we can also show that symmetry of the compatibility relation forces an orthoposet to be skew orthomodular.

\begin{proposition}
Let $\mathbf P=(P,\le,{}',0,1)$ be an orthoposet with symmetric relation $\C$. Then $\mathbf P$ is a skew orthomodular poset.	
\end{proposition}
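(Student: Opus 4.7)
The plan is to take arbitrary $a,b \in P$ with $a \le b$ and verify directly the first version of the skew orthomodular law, namely $U(b) = U\bigl(a, L(b,a')\bigr)$. The key ingredients will all be available from the excerpt: part (ix) of Proposition~\ref{prop1} supplies one direction of compatibility for free, the symmetry hypothesis upgrades this to the other direction, and then the defining identity of $\C$ together with elementary cone manipulations close the argument.

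First I would invoke Proposition~\ref{prop1}(ix) to obtain $a \mathrel{\C} b$ directly from $a \le b$. Since $\mathbf P$ has symmetric $\C$ by assumption, this immediately gives $b \mathrel{\C} a$. Unfolding the definition of compatibility for this pair yields
\[
U(b) = U\bigl(L(b,a), L(b,a')\bigr).
\]
The next step is to simplify $L(b,a)$: from $a \le b$ we get $L(b,a) = L(a)$, so the right-hand side becomes $U\bigl(L(a), L(b,a')\bigr)$.

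To finish, I would use the standard identity $U(X \cup Y) = U(X) \cap U(Y)$ for upper cones, applied to $X = L(a)$ and $Y = L(b,a')$, together with $UL(a) = U(a)$. This gives
\[
U\bigl(L(a), L(b,a')\bigr) = U(a) \cap U\bigl(L(b,a')\bigr) = U\bigl(\{a\} \cup L(b,a')\bigr) = U\bigl(a, L(b,a')\bigr),
\]
so $U(b) = U\bigl(a, L(b,a')\bigr)$, which is one of the two equivalent forms of the skew orthomodular law. As this holds for every $a \le b$, $\mathbf P$ is skew orthomodular.

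I do not foresee a genuine obstacle: the symmetry hypothesis essentially converts the ``easy'' compatibility $a \C b$ (valid for every comparable pair) into the missing direction $b \C a$, and the latter is exactly what the skew orthomodular law asks for once one notes $L(b,a)=L(a)$. The only minor point to watch is that the manipulation of cones used at the end is being applied to a union of a singleton with a (possibly infinite) set, which is harmless since $U$ turns arbitrary unions into intersections.
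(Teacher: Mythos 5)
Your proof is correct and follows essentially the same route as the paper: apply Proposition~\ref{prop1}(ix) to get $a\mathrel{\C}b$, use symmetry to obtain $b\mathrel{\C}a$, and then reduce $U\big(L(b,a),L(b,a')\big)$ to $U\big(a,L(b,a')\big)$ via $L(b,a)=L(a)$ and $UL(a)=U(a)$. No gaps.
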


\begin{proof}
If $a,b\in P$ and $a\le b$ then $a\mathrel{\C}b$ according to (ix) of Proposition~\ref{prop1}	whence $b\mathrel{\C}a$ by symmetry of $\C$ and hence
\[
U(b)=U\big(L(b,a),L(b,a')\big)=U\big(L(a),L(b,a')\big)=U\big(a,L(b,a')\big).
\]
\end{proof}

We can show that a Boolean poset satisfies one more interesting property. Namely, there exists a ternary operator satisfying formally the same property as the Pixley term in Boolean algebras despite the fact that the results of this term for entries from $P$ need not be elements of $P$ in general but only subsets of $P$.

\begin{proposition}\label{prop3}
Let $\mathbf P=(P,\le,{}',0,1)$ be an orthoposet and define
\[
T(x,y,z):=\Min U\big(L(x,z),L(x,y',z'),L(x',y',z)\big)
\]
for all $x,y,z\in P$. Then the following are equivalent:
\begin{enumerate}[{\rm(i)}]
\item $T$ satisfies the identities $T(x,x,z)\approx z$, $T(x,z,z)\approx x$ and $T(x,y,x)\approx x$,
\item $\mathbf P$ satisfies the identity $\Min UL\big(U(x,x'),y\big)\approx U\big(L(x,y),L(x',y)\big)$.
\end{enumerate}
\end{proposition}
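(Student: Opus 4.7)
The plan is to unpack both (i) and (ii) using only the orthoposet identities $L(x,x')=\{0\}$ and $U(x,x')=\{1\}$, and to verify that both collapse to the same equality of upper cones.

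First, I would simplify the three identities in~(i). In $T(x,y,x)$ the two subterms $L(x,y',x')$ and $L(x',y',x)$ each contain the complementary pair $\{x,x'\}$ and therefore reduce to $\{0\}$, so $T(x,y,x)=\Min U(L(x))=\{x\}$; hence $T(x,y,x)\approx x$ is automatic in every orthoposet and carries no content. In $T(x,x,z)$ the middle subterm $L(x,x',z')$ reduces to $\{0\}$, leaving $T(x,x,z)=\Min U\bigl(L(x,z),L(x',z)\bigr)$; since $z$ is automatically a common upper bound of $L(x,z)$ and $L(x',z)$, the identity $T(x,x,z)\approx z$ holds exactly when $U\bigl(L(x,z),L(x',z)\bigr)=U(z)$. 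An analogous reduction applied to $T(x,z,z)$ (in which the last subterm $L(x',z',z)$ collapses to $\{0\}$) shows that $T(x,z,z)\approx x$ amounts to $U\bigl(L(x,z),L(x,z')\bigr)=U(x)$, and the renaming $(x,z)\mapsto(y,x)$ turns this into the same identity. Thus (i) collapses to the single condition
\[
U\bigl(L(x,y),L(x',y)\bigr)=U(y)\quad\text{for all }x,y\in P. \qquad(*)
\]

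Second, I would compute the left-hand side of~(ii): since $U(x,x')=\{1\}$, one has $L\bigl(U(x,x'),y\bigr)=L(1,y)=L(y)$, hence $UL\bigl(U(x,x'),y\bigr)=U(y)$ and $\Min UL\bigl(U(x,x'),y\bigr)=\{y\}$. Because $y$ itself lies in $U\bigl(L(x,y),L(x',y)\bigr)$, the identity in~(ii) is equivalent to the assertion that $y$ is the unique minimal element of $U\bigl(L(x,y),L(x',y)\bigr)$; together with the automatic reverse inclusion $U(y)\subseteq U\bigl(L(x,y),L(x',y)\bigr)$ (coming from $L(x,y)\cup L(x',y)\subseteq L(y)$), this is precisely~$(*)$.

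Combining the two reductions gives (i)\,$\Leftrightarrow$\,$(*)$\,$\Leftrightarrow$\,(ii). The only real obstacle is keeping the $\Min$-, $U$- and $L$-operators straight while reducing the three $T$-terms via complementation; once this bookkeeping is done, two of the three identities in~(i) are seen to be renamings of the same statement, the third is automatic, and the equivalence with~(ii) is then a direct computation.
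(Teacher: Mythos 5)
Your proof takes essentially the same route as the paper's: simplify each $T$-term using $L(x,x')=\{0\}$ and $U(x,x')=\{1\}$, note that $T(x,y,x)\approx x$ is automatic, that $T(x,z,z)\approx x$ is a renaming of $T(x,x,z)\approx z$ (the paper phrases this as $T(x,z,z)\approx T(z,z,x)$), and match the one surviving identity against (ii). One caveat: the reduction of $T(x,x,z)\approx z$ literally yields $\Min U\bigl(L(x,z),L(x',z)\bigr)=\{z\}$, whereas your pivot condition $(*)$, namely $U\bigl(L(x,z),L(x',z)\bigr)=U(z)$, is a priori strictly stronger --- an up-set whose only minimal element is $z$ need not equal $U(z)$ in the absence of a descending chain condition, which is not assumed here, so the ``exactly when'' in that step (and the corresponding step for (ii)) is an overstatement. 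This does not damage the final equivalence, because (i) and (ii) both reduce to the very same $\Min$-statement (the left-hand side of (ii) collapses to the singleton $\{y\}$, as you and the paper both compute), so the equivalence can be read off directly without the detour through $(*)$.
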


\begin{proof}
We have
\begin{align*}
T(x,x,z) & \approx\Min U\big(L(x,z),L(x,x',z'),L(x',x',z)\big)\approx\Min U\big(L(x,z),0,L(x',z)\big)\approx \\
         & \approx\Min U\big(L(x,z),L(x',z)\big), \\
       z & \approx\Min U(z)\approx\Min UL(z)\approx\Min UL(1,z)\approx\Min UL\big(U(x,x'),z\big), \\
T(x,z,z) & \approx T(z,z,x), \\
T(x,y,x) & \approx\Min U\big(L(x,x),L(x,y',x'),L(x',y',x)\big)\approx\Min U\big(L(x),0,0\big)\approx\Min U(x)\approx x.
\end{align*}
\end{proof}

\begin{corollary}
If $\mathbf P=(P,\le,{}',0,1)$ is a Boolean poset and
\[
T(x,y,z):=\Min U\big(L(x,z),L(x,y',z'),L(x',y',z)\big)
\]
for all $x,y,z\in P$ then $T$ satisfies the identities $T(x,x,z)\approx z$, $T(x,z,z)\approx x$ and $T(x,y,x)\approx x$,
\end{corollary}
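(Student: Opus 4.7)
The plan is to apply Proposition~\ref{prop3}, which reduces the task to verifying in every Boolean poset the single set identity
\[
\Min UL\bigl(U(x,x'),y\bigr) \approx U\bigl(L(x,y),L(x',y)\bigr).
\]
Two ingredients make this immediate. First, in any orthoposet, because $'$ is a complementation, $U(x,x')=\{1\}$ and dually $L(x,x')=\{0\}$. Second, a Boolean poset is distributive, and one of the equivalent forms of distributivity listed after Proposition~\ref{prop3} is $UL\bigl(U(x,y),z\bigr) \approx U\bigl(L(x,z),L(y,z)\bigr)$. Substituting $y\mapsto x'$ already gives $UL\bigl(U(x,x'),y\bigr) = U\bigl(L(x,y),L(x',y)\bigr)$, and the orthoposet collapse $U(x,x')=\{1\}$ reduces both sides to $UL(y)=U(y)$. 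Taking $\Min$ of the left side then yields $\{y\}$, which is the intended value under the paper's convention of identifying singletons with their unique element.

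If one prefers to bypass Proposition~\ref{prop3} and verify the three Pixley identities $T(x,x,z)\approx z$, $T(x,z,z)\approx x$, $T(x,y,x)\approx x$ directly, the same two ingredients suffice. In each case two of the three summands defining $T$ contain a factor of the form $L(a,a')$ and therefore collapse to $\{0\}$, leaving an expression of shape $U\bigl(L(u,v),L(u',v)\bigr)$ (or simply $UL(x)$) which the distributive identity reduces to $U(z)$, $U(x)$, or $U(x)$ respectively; the minima are the desired $z$, $x$, $x$. This is in fact precisely the computation already carried out in the proof of Proposition~\ref{prop3}, now combined with the Boolean collapse.

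The main obstacle is purely bookkeeping: one has to keep track of when $L(a,a')$ collapses to $\{0\}$ and handle the singleton-as-element convention carefully. There is no genuine mathematical difficulty beyond one application of the distributive law, which is why the result is stated as a corollary rather than a theorem.
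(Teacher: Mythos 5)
Your argument is correct and follows the paper's own route: the paper's proof is the single line ``this follows from Proposition~\ref{prop3}'', and your first paragraph supplies exactly the verification left implicit there, namely that a Boolean poset satisfies condition (ii) via the distributive identity $UL\big(U(x,y),z\big)\approx U\big(L(x,z),L(y,z)\big)$ with $y\mapsto x'$ together with the complementation collapse $U(x,x')=\{1\}$. One cosmetic slip in your alternative second route: for $T(x,x,z)$ and $T(x,z,z)$ only one of the three summands contains a factor of the form $L(a,a')$ (two summands survive, which is precisely why the distributive law is still needed there), but your subsequent description of the surviving expression of shape $U\big(L(u,v),L(u',v)\big)$ is right, so nothing essential is affected.
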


\begin{proof}
This follows from Proposition~\ref{prop3}.
\end{proof}

\section{Adjointness in orthoposets}

In the last section we show that in special orthoposets the logical connective conjunction can be defined as an operator in such a way that there exists its adjoint operator formalizing implication. This is important for the logic based on orthoposets since then the derivation rule Modus Ponens holds.

Let $(P,\le,{}',0,1)$ be an orthoposet satisfying the ACC. We define two binary operators on $P$, called {\em sharp conjunction} and {\em sharp implication}, respectively, as follows:
\begin{align*}
x\odot y & :=\Max L\big(y,U(x,y')\big), \\
  x\to y & :=\Min U\big(x',L(x,y)\big)
\end{align*}
for all $x,y\in P$. Moreover, we define the following conditions:
\begin{enumerate}[(1)]
\item $L\Big(U\big(L(x,y),y'\big),y\Big)\subseteq L(x)$ for all $x,y\in P$,
\item $U\Big(L\big(U(x,y),y'\big),y\Big)\subseteq U(x)$ for all $x,y\in P$.
\end{enumerate}

Now we show that for orthoposets satisfying conditions (1) and (2) the introduced operators really form an adjoint pair.

\begin{theorem}\label{th2}
Let $(P,\le,{}',0,1)$ be an orthoposet satisfying the {\rm ACC} as well as conditions {\rm(1)} and {\rm(2)} and let $a,b,c\in P$. Then
\[
a\odot b\le c\text{ if and only if }a\le b\to c.
\]
\end{theorem}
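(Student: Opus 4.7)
The plan is to reduce the biconditional to a set-theoretic equivalence between cones, and then to match each direction against conditions (1) and (2) via carefully chosen substitutions.

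First I would unfold the definitions of $\odot$ and $\to$ using the ACC. Since $a\odot b=\Max L(b,U(a,b'))$, the statement $a\odot b\le c$ means that every element of $\Max L(b,U(a,b'))$ lies in $L(c)$, i.e.\ $c\in U\big(\Max L(b,U(a,b'))\big)$. By Lemma~\ref{lem2}(i) this upper cone equals $U\big(L(b,U(a,b'))\big)$, so
\[
a\odot b\le c\iff L(b,U(a,b'))\subseteq L(c).
\]
Since the ACC and DCC are equivalent in an orthoposet, the dual Lemma~\ref{lem2}(ii) yields
\[
a\le b\to c\iff U(b',L(b,c))\subseteq U(a).
\]
So the theorem reduces to the equivalence of these two cone inclusions.

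For the forward direction I would assume $L(b,U(a,b'))\subseteq L(c)$. Combining this with the trivial $L(b,U(a,b'))\subseteq L(b)$ strengthens it to $L(b,U(a,b'))\subseteq L(b,c)$. Now pick any $z\in U(b',L(b,c))$. Then $z\ge b'$ and $z$ bounds $L(b,c)\supseteq L(U(a,b'),b)$ from above, so $z\in U\big(L(U(a,b'),b),b'\big)$. Applying condition~(2) with $x:=a$ and $y:=b'$ (so that $y'=b$) gives $U\big(L(U(a,b'),b),b'\big)\subseteq U(a)$, whence $z\ge a$.

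For the backward direction I would dualize. Assume $U(b',L(b,c))\subseteq U(a)$; combined with the trivial $U(b',L(b,c))\subseteq U(b')$ this yields $U(b',L(b,c))\subseteq U(a,b')$, hence by order reversal $LU(a,b')\subseteq LU(b',L(b,c))$. Now for any $w\in L(b,U(a,b'))$ we have $w\le b$ and $w\in LU(a,b')\subseteq LU(b',L(b,c))$, so $w\in L\big(U(L(b,c),b'),b\big)$; condition~(1) with $x:=c$ and $y:=b$ then yields $w\le c$, as required.

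The main obstacle is guessing the correct substitutions into (1) and (2). One has to set $y:=b'$ in (2) so that $y'=b$ recreates the shape of $L(U(a,b'),b)$ supplied by the hypothesis, and symmetrically $y:=b$ in (1) so that $y'=b'$ recreates $U(L(b,c),b')$. Once these substitutions are spotted, the rest of the argument is just a short rearrangement of cones using that $L$ and $U$ reverse inclusions.
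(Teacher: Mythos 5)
Your proposal is correct and follows essentially the same route as the paper's proof: both reduce the two inequalities to the cone inclusions $L\big(b,U(a,b')\big)\subseteq L(c)$ and $U\big(b',L(b,c)\big)\subseteq U(a)$ via the ACC/DCC (Lemma~\ref{lem2}), and then apply condition (2) with $x:=a$, $y:=b'$ and condition (1) with $x:=c$, $y:=b$. The only difference is that you argue elementwise where the paper writes chains of set inclusions, which is purely cosmetic.
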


\begin{proof}
The following are equivalent:
\begin{align*}
                 a\odot b & \le c, \\
\Max L\big(b,U(a,b')\big) & \le c, \\
     L\big(b,U(a,b')\big) & \le c, \\
     L\big(b,U(a,b')\big) & \subseteq L(c).
\end{align*}
Moreover, the following are equivalent:
\begin{align*}
                   a & \le b\to c, \\
                   a & \le\Min U\big(b',L(b,c)\big), \\	
                   a & \le U\big(b',L(b,c)\big), \\	
U\big(b',L(b,c)\big) & \subseteq U(a).
\end{align*}
If $a\odot b\le c$ then according to (2)
\begin{align*}
U\big(b',L(b,c)\big) & =U\big(b',L(b)\cap L(c)\big)\subseteq U\Big(b',L(b)\cap L\big(b,U(a,b')\big)\Big)= \\
                     & =U\Big(b',L\big(b,U(a,b')\big)\Big)=U\Big(L\big(U(a,b'),b\big),b'\Big)\subseteq U(a)
\end{align*}
and hence $a\le b\to c$. If, conversely, $a\le b\to c$ then according to (1)
\begin{align*}
L\big(b,U(a,b')\big) & =L\big(b,U(a)\cap U(b')\big)\subseteq L\Big(b,U\big(b',L(b,c)\big)\cap U(b')\Big)= \\
                     & =L\Big(b,U\big(b',L(b,c)\big)\Big)=L\Big(U\big(L(c,b),b'\big),b\Big)\subseteq L(c)
\end{align*}
and hence $a\odot b\le c$.
\end{proof}

\begin{corollary}
Let $\mathbf P=(P,\le,{}',0,1)$ be a Boolean poset. Then $\mathbf P$ satisfies conditions {\rm(1)} and {\rm(2)} and hence the operators $\odot$ and $\to$ form an adjoint pair.
\end{corollary}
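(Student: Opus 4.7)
The plan is to verify that $\mathbf P$ satisfies conditions (1) and (2); the adjointness then follows at once from Theorem~\ref{th2}. Since (2) is the $'$-dual of (1), the substantive task is (1), namely $L\bigl(U(L(x,y),y'),y\bigr)\subseteq L(x)$.

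The key observation is that, because $'$ is a complementation in an orthoposet, $L(y,y')=\{0\}$ and dually $U(y,y')=\{1\}$. These collapses, together with the Galois-closure identities $LUL=L$ and $ULU=U$, should reduce the left-hand side of (1) to $L(x,y)$, which is plainly a subset of $L(x)$.

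Concretely, I would first apply the third distributive identity to rewrite $U(L(x,y),y')$ as $UL\bigl(U(x,y'),U(y,y')\bigr)$; since $U(y,y')=\{1\}$ contributes nothing inside $L$, a short manipulation via $ULU=U$ collapses this to $U(x,y')$, and so $L\bigl(U(L(x,y),y'),y\bigr)=L\bigl(U(x,y'),y\bigr)$. Applying the first distributive identity then gives $L\bigl(U(x,y'),y\bigr)=LU\bigl(L(x,y),L(y',y)\bigr)$. Because $L(y,y')=\{0\}\subseteq L(x,y)$, the union inside simplifies to $L(x,y)$, on which $LU$ acts as the identity (by $LUL=L$). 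This yields $L\bigl(U(L(x,y),y'),y\bigr)=L(x,y)\subseteq L(x)$, proving (1).

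For (2), I would apply (1) with $x',y'$ in place of $x,y$ and then take the $'$-image of both sides; since $'$ is a bijection satisfying $(L(A))'=U(A')$, the inclusion transforms into precisely (2). The only minor obstacle is keeping the dualisation tidy; the real content is the collapse-and-distribute reduction for (1), which becomes transparent once one notices the role played by the complementation in trivialising $L(y,y')$ and $U(y,y')$.
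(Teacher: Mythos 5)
Your proposal is correct and follows essentially the same route as the paper's proof: both reduce condition (1) via the distributive identities together with $U(y,y')=1$, $L(y,y')=0$ and the closure identities $ULU=U$, $LUL=L$ to obtain $L\big(U(L(x,y),y'),y\big)=L(x,y)\subseteq L(x)$. The only (immaterial) difference is that you obtain (2) from (1) by applying the antitone involution, whereas the paper carries out the dual computation explicitly.
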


\begin{proof}
For $a,b\in P$ we have
\begin{align*}
L\Big(U\big(L(a,b),b'\big),b\Big) & =L\Big(UL\big(U(a,b'),U(b,b')\big),b\Big)=L\Big(UL\big(U(a,b'),1\big),b\Big)= \\
& =L\big(ULU(a,b'),b\big)=L\big(U(a,b'),b\big)=LU\big(L(a,b),L(b',b)\big)= \\
& =LU\big(L(a,b),0\big)=LUL(a,b)=L(a,b)\subseteq L(a), \\	
U\Big(L\big(U(a,b),b'\big),b\Big) & =U\Big(LU\big(L(a,b'),L(b,b')\big),b\Big)=U\Big(LU\big(L(a,b'),0\big),b\Big)= \\
& =U\big(LUL(a,b'),b\big)=U\big(L(a,b'),b\big)=UL\big(U(a,b),U(b',b)\big)= \\
& =UL\big(U(a,b),1\big)=ULU(a,b)=U(a,b)\subseteq U(a).
\end{align*}
\end{proof}

\begin{remark}
Let $(P,\le,{}',0,1)$ be an orthoposet satisfying the {\rm ACC} as well as conditions {\rm(1)} and {\rm(2)} and consider the propositional logic based on this poset. The assertion of Theorem~\ref{th2} yields the following simple derivation:
\[
x\to y=x\to y\text{ implies }(x\to y)\odot x\le y.
\]
But this is just the derivation rule Modus Ponens saying that the logical value of proposition $y$ cannot be smaller than the logical value of the conjunction of $x$ and $x\to y$.
\end{remark}








Authors' addresses:

Ivan Chajda \\
Palack\'y University Olomouc \\
Faculty of Science \\
Department of Algebra and Geometry \\
17.\ listopadu 12 \\
771 46 Olomouc \\
Czech Republic \\
ivan.chajda@upol.cz

Helmut L\"anger \\
TU Wien \\
Faculty of Mathematics and Geoinformation \\
Institute of Discrete Mathematics and Geometry \\
Wiedner Hauptstra\ss e 8-10 \\
1040 Vienna \\
Austria, and \\
Palack\'y University Olomouc \\
Faculty of Science \\
Department of Algebra and Geometry \\
17.\ listopadu 12 \\
771 46 Olomouc \\
Czech Republic \\
helmut.laenger@tuwien.ac.at

\begin{thebibliography}9
\bibitem{Bv}
G.~Birkhoff and J.~von~Neumann, The logic of quantum mechanics. Ann.\ of Math.\ {\bf37} (1936), 823--843.
\bibitem B
L.~Beran, Orthomodular Lattices. Reidel, Dordrecht 1985. ISBN 90-277-1715-X.
\bibitem{CKL}
I.~Chajda, M.~Kola\v r\'ik and H.~L\"anger, Orthomodular and skew orthomodular posets. Symmetry {\bf15} (2023), 810 (13 pp.).
\bibitem{CL}
I.~Chajda and H.~L\"anger, Logical and algebraic properties of generalized orthomodular posets. Math.\ Slovaca {\bf72} (2022), 275--286.
\bibitem{DP}
B.~D'Hooghe and J.~Pykacz, On some new operations on orthomodular lattices. Internat.\ J.\ Theoret.\ Phys.\ {\bf39} (2000), 641--652.
\bibitem K
G.~Kalmbach, Orthomodular Lattices. Academic Press, London 1983. ISBN 0-12-394580-1.
\bibitem{PP}
P.~Pt\'ak and S.~Pulmannov\'a, Orthomodular Structures as Quantum Logics. Kluwer, Dordrecht 1991. ISBN 0-7923-1207-4.
\bibitem P
J.~Pykacz, New operations on orthomodular lattices:``disjunction'' and ``conjunction'' induced by Mackey decompositions. Notre Dame J.\ Formal Logic {\bf41} (2000), 59--76.
\end{thebibliography}
\end{document}